\documentclass{article}
\usepackage{geometry} 
\geometry{a4paper} 
\usepackage{amsmath, amsthm, amssymb, amsfonts, enumerate}
\usepackage[colorlinks=true,linkcolor=blue,urlcolor=blue]{hyperref}
\usepackage{enumerate, graphicx, float, caption}

\def\E{\mathbb E}
\def\dif{\mathrm d}
\def\P{\mathbb P}

\def\eps{\varepsilon}
\def\ep{\varepsilon}
\def\F{\mathcal F}
\def\L{\mathcal L}

\captionsetup{font={small}}
\captionsetup{labelfont=bf}

\usepackage{fancyhdr} 
\pagestyle{fancy} 
\lhead{}\chead{}\rhead{}
\lfoot{}\cfoot{\thepage}\rfoot{}

\allowdisplaybreaks

\newtheorem{theorem}{Theorem}[section]
\newtheorem{assumption}[theorem]{Assumption}
\newtheorem{proposition}[theorem]{Proposition}
\newtheorem{lemma}[theorem]{Lemma}

\theoremstyle{definition}

\newtheorem{remark}[theorem]{Remark}

\newtheorem*{acknowledgement}{Acknowledgement}

\floatstyle{plain} \restylefloat{figure} \restylefloat{table}

\title{Learning from the past in an irreversible investment problem}
\author{Topias Tolonen-Weckström\thanks{Department of Mathematics, Uppsala University. Box 256, 75105 Uppsala, Sweden. \textit{Email address:} \href{mailto: topias.tolonen@math.uu.se}{topias.tolonen@math.uu.se}.}}
\date{\today}

\begin{document}

\maketitle

\begin{abstract}
We consider an irreversible investment problem under incomplete information, where the investor decides whether and when to make investments in a project. Upon investment, the investor acquires previously hidden information from the project's past (''learning from the past''), and so the learning rate of the problem is controlled by investing. We set up this original problem as an recursively defined stopping problem, where the learning rate is accelerated after each recursion step. To solve the problem, we show that at each step, there indeed exists a one-sided stopping boundary under general conditions. We proceed to present the optimal investment strategy as a sequence of semi-explicit stopping boundaries derived from smooth fit conditions. Feasibility of our approach is then demonstrated by solving boundaries numerically and by illustrating comparative statistics.
\bigskip

\noindent {\bf Keywords:} irreversible investment, incomplete information, recursive optimal stopping, free-boundary problem, control of learning rate, project acquisition.
\bigskip

\noindent \emph {Mathematics Subject Classification 2020:} 60G40, 93E11, 91G99.
\end{abstract}

\section{Introduction}\label{ch1}
Consider a Bayesian decision-maker (investor) whose objective is to decide whether and when to make irreversible investments in a project. The decision-maker makes noisy observations of the project value under incomplete information and knows that after each investment, they will learn more about the project. We call this notion \textbf{learning from the past} and it is the main point of interest in our study. 

We model such a problem as
\begin{equation}\label{introdiffusioneq}
\begin{cases}
    \dif X_t = \mu\dif t+ \sigma\dif W_t\\
     Y_t = X_{t+\delta U_t},
\end{cases}   
\end{equation}
where the process $X_t$ represents the observation process with $\sigma>0$, a standard Brownian motion $W_t$, and an unknown project value $\mu$ assuming a Bernoulli distribution with two possible values $\mu_0<0<\mu_1$. In~\eqref{introdiffusioneq}, $Y_t$ induces the learning-from-the-past effect, $\delta$ is a positive constant denoting amount of learning per unit of investment, and $(U_t)_{t\geq0}$ is an increasing control process with $U_0=0$ and $U_t\leq 1$. The objective of the investor is then to control $U_t$ to maximize 
\begin{equation}\label{introvalueftion}
\E\left[\int_0^\infty e^{-rt}\mu \dif U_t\right],
\end{equation}
where $r$ is a known discount rate.

Intuitively, such a problem is solved by finding a suitable stopping boundary. However, we note that there are problems in the above problem formulation as the information available to the investor depends on the control process $U_t$, which in turn should depend on the available information, creating a circular feedback between the observation process and admissible controls adapted to it. Also, the continuous formulation is challenging as it leads to rather involved regularity considerations along the curved boundary. Instead of formulating this problem precisely, in order to gain mathematical tractability and to focus our efforts on the study of the learning-from-the-past effect, we choose to study a discrete version of the problem.

In particular, we restrict the possible investment levels to only attain discrete values so that $U_t\in\{u_0,u_1,\ldots,u_N\}$ with $\{u_n\}_{n=0}^N$ decreasing, $u_N=0$, and $u_0=1$. Here the index $n$ in $u_n$ indicates that there are $n$ remaining investment possibilities, so $u_N$ is the initial level of investment and $u_0$ is the investment level after the last possible investment. Under such restriction, the control problem described in~\eqref{introdiffusioneq}--\eqref{introvalueftion} collapses into a stopping problem. The investor seeks a sequence of investment times $\{\tau_n\}_{n=1}^N$ in order to optimize 
\begin{equation}\label{discrete-valueftion-mu}
\sup_{\{\tau_n\}_{n=1}^N}\E\left[\sum_{n=1}^N e^{-r\tau_n}\mu \Delta u_n\right],    
\end{equation}
where $\Delta u_n=u_{n-1}-u_n$. We set up this problem properly in Section~\ref{ch2}.  

Learning-from-the-past effect arises naturally in applications of irreversible investment problems. In particular, it is a natural model for cases of \textbf{project acquisition}, where the investor, upon acquisition, learns about the project's (for example, a company) hidden intangible assets not accounted for in its public financial statements. When accessing the project as an insider, the investor gains insider information about such assets, which can include previously unaccounted and publicly hidden goodwill, intellectual property, human economic value (human resources), and organizational culture.

Our primary contribution is to set up and study an original problem of this type, deriving semi-explicit solutions for the optimal stopping problem. In our recursive problem formulation under incomplete information and additional learning, we show that at each investment step where the future values of subsequent investments are enveloped, there exists a one sided stopping boundary. To make the standard methods of optimal stopping go through, we provide a careful analysis of properties of different payoff functionals. We show that the boundaries characterizing the optimal investment times exist and are well-defined, as well as provide equations from which the boundaries can be solved numerically. Moreover, we demonstrate that the solution concept is feasible by providing numerical examples and comparative statistics.

\subsection{Related literature}

Our model of learning-from-the-past is original to our article. The model provides a new way of controlling learning rate in a optimal stopping problem in an irreversible investment setting under incomplete information. 

Investment and utility maximization problems under incomplete information are well studied in the field of stochastic control. An early contemporary reference that combines stochastic control with incomplete information is \cite{L}. A key study in early investment problems within the field is \cite{DMV2005}, where an investment timing problem under incomplete information with respect to an option payoff functional is studied. General investment timing problems are examined, for example, in \cite{DMV2006}, \cite{DP}, and \cite{MS}, while \cite{ferrari2015integral} examines an irreversible investment problem by characterizing the free boundary as the unique solution of an integral equation. Investment problems with a Bayesian setting under incomplete information are discussed for example in \cite{harrison2015investment} and \cite{sunar2021competitive}. In particular, \cite{sunar2021competitive} discusses the relationship between belief of a favorable market and investment timing, closely resembling our set-up. In a recent effort, \cite{gierens2025irreversibleinvestmentproblemincomplete} studies an irreversible investment problem under incomplete information, where the investment is modeled as a geometric Brownian motion. 

Main point of interest in our model, rarely discussed in optimal control research, is that the decision-maker controls the learning rate. Such stochastic control problems have been discussed only recently. Statistical problems of this type are considered in \cite{DS} with a problem of quickest detection with reversible controls, \cite{EK} with an estimation problem with costly observations, and \cite{EM2023} with a detection problem with irreversible controls and a linear cost to increase observation rate. \cite{LBD} incorporates a irreversible investment problem, where an investment directly affects the drift coefficient of the observation process.

We construct a recursively defined stopping problem but initially motivate our model as a multiple stopping problem. Connection between the two is discussed in \cite{CD} under relatively general assumptions. In addition, \cite{carmonatouzi} discusses multiple optimal stopping for American swing options, largely resembling our set-up.

The set-up in \cite{LBD} also models a learning feature in an irreversible investment problem. In their set-up, they consider an example of project expansion. Upon investing, the investor begins to learn at an accelerated rate due to gaining more capacities of observing the development of the market, product testing, and realized demand or production costs, for example by setting up a new production unit. There is a crucial difference between their learning and our learning-from-the-past effect. In our model, the key application considers learning by acquiring already established units. Moreover, opposed to their set-up, the investor can't directly affect the diffusion coefficient of the output process directly by increasing their investment level. Instead, the additional learning is modeled by speeding up the observation process upon investment. Realization of this accelerated process then reveals previously hidden information, inducing the learning-form-the-past effect. Despite these differences, both of the models study problems of irreversible investment under incomplete information, where the amount of learning is both controlled and monotone upon investment: investing more yields more information to the investor.

\subsection{Structure of the article}

The remainder of this article is organized as follows. In Section~\ref{ch2} we set up the model and define the key concepts we use to build our model. More specifically, we introduce a recursive stopping problem and introduce the \emph{learning-from-the-past} effect as an accelerated learning rate, which the investor uses to evaluate evaluates the value of subsequent steps in the recursion. In Section~\ref{ch3}, a candidate solution is characterized in terms of an optimal investment strategy and the corresponding stopping boundaries. In Section~\ref{ch4}, we show that each investment step induces a one-sided stopping boundary. Main results are found in Section~\ref{ch5}, where verification result for each step is provided together with a main theorem which shows that individual verification results go through when combined recursively, resulting in an optimal investment strategy for our problem. Finally, we illustrate our theoretical results with key numerical examples in Section~\ref{ch6}.

\section{Problem set-up}\label{ch2}
Let $(X_t)_{t\ge0}$ be a diffusion process with dynamics
\begin{equation}\label{observationprocess}
    \dif X_t = \mu \dif t + \sigma \dif W_t,
\end{equation}
where $\sigma>0$ is a known constant and $W_t$ is a standard Brownian motion defined on a  probability space $(\Omega,\F,\P)$.

We consider an investor who is facing an optimization problem of investing to a project. The project value $\mu$ takes possible values $\mu_0$ and $\mu_1$ with $\mu_0<0<\mu_1$. We model incomplete information, i.e. the lack of the investor's information on $\mu$, by letting the investor only observe realizations of $X_t$. Let $\F^X_t$ be the completion of the $\sigma$-algebra $\sigma\{X_s:0\le s \le t\}$. 
Then, based on $\F^X_t$, the investor is interested in determining optimal investment times to maximize their value based on the unknown project value $\mu$. We assume the admissible investment levels to be of the form
\begin{equation}\label{admissibleu}
    u_n = \frac{N-n}{N} \quad \text{ for } n=0,1,\ldots,N,
\end{equation}
and that each upon each investment, the investment level is raised from $u_{n}$ to $u_{n-1}$. That is, the possible levels of investment $\{u_n\}_{n=0}^N$ is a decreasing sequence with $u_N=0$ and $u_0=1$.

To characterize the investor's learning about $\mu$ by observing realizations of $X_t$, we define the belief process of the decision-maker as the conditional probability
\begin{equation}\label{piprocess}
    \Pi_t = \P(\mu=\mu_1 | \F_t^{X}).
\end{equation}
From standard literature on filter theory, see for example \cite{liptser1977statistics}, we find that the dynamics of the process $\Pi_t$ can be described as
\begin{equation}\label{pidynamics}
    \dif \Pi_t = \rho\Pi_t(1-\Pi_t) \dif \tilde W_t,
\end{equation}
where 
\begin{equation}\label{innovationsprocess}
\tilde W_t := \frac{1}{\sigma}\left(X_t-\int_0^t (\mu_0+(\mu_1-\mu_0)\Pi_s)\,ds\right)
\end{equation}
is the so-called innovations process (an $\F_t^{X}-$Brownian motion), $\rho:=\frac{\mu_1-\mu_0}{\sigma}$ is the signal-to-noise ratio, and $\Pi_0 = \pi\in(0,1)$ is a known constant representing the investor's prior information that $\mu=\mu_1$ (i.e. the probability $\P(\mu=\mu_1)$).

It is well-known that $\Pi_t$ is a strong Markov process as it solves~\eqref{pidynamics} (see, for example,~\cite{oks}), and so we may embed the problem into a Markovian setting, and additionally optimization over $\F_t^X-$stopping times coincides with optimization over $\F^\Pi_t-$stopping times ($\F^\Pi_t$ being the completion of $\sigma\{\Pi_s: 0\le s\le t\}$).

Let $N$ be fixed so that $u_{n-1}-u_n=\frac{1}{N}$ for all $n=1,\ldots,N$. Then, for any $F^X_t-$stopping time $\tau$, it follows from the tower property of conditional expectation and the Markov property of $\Pi_t$ that
\begin{equation}\label{conditionaling}
\E_\pi\left[e^{-r\tau}\mu(u_{n-1}-u_n)\right] = \frac{(\mu_1-\mu_0)}{N}\E_\pi\left[e^{-r\tau}(\Pi_\tau-k)\right],
\end{equation}
where $k=\frac{-\mu_0}{\mu_1-\mu_0}$. (Observe that optimizing over the left-hand side and the right-hand side of~\eqref{conditionaling} coincide).

Now consider the stopping problem~\eqref{discrete-valueftion-mu} presented in Section~\ref{ch1}. Upon the last possible investment, the investor wants to find an $\F_t^\Pi-$stopping time $\tau$ to solve
\[
V_1(\pi)=\sup_\tau \E_\pi\left[e^{-r\tau}(\Pi_\tau-k)\right].
\]
Then, upon the previous investment, it is intuitively clear (see~\cite{CD} for a general reduction of a multiple stopping problem) that the investor optimizes over a discounted payoff $e^{-r\tau}(\Pi_\tau-k)$ together with an expected value of the remaining investment. That is, the investor solves 
\[
V_2(\pi)=\sup_\tau \E_\pi\left[e^{-r\tau}\left(\Pi_\tau - k + \E_{\Pi_\tau}\left[V_{n-1}(\Pi_\varepsilon)\right]\right)\right],
\]
where $\Pi_\varepsilon$ denotes the additional $\varepsilon$ units of observing the process $\Pi_t$. 

These steps can be propagated up to $N$ steps, and so solving for~\eqref{discrete-valueftion-mu} reduces into solving a recursively defined stopping problem
\begin{equation}\label{valueftion}
    \begin{cases}
            V_1(\pi) = \sup_\tau \E_\pi\left[e^{-r\tau}\left(\Pi_\tau-k\right)\right] \\
            V_n(\pi) = \sup_\tau \E_\pi\left[e^{-r\tau}g_{n}(\Pi_\tau)\right]
    \end{cases}
\end{equation}
for $n=2,\ldots,N$, where
\begin{equation}
    g_n(\pi):= \pi - k + F_{n-1}(\pi),
\end{equation}
\begin{equation}\label{F}
    F_{n-1}(\pi) := \E_\pi\left[V_{n-1}(\Pi_\varepsilon)\right],
\end{equation}
and $\tau$ is an $\F_t^\Pi-$stopping time.

In Sections~\ref{ch3}--\ref{ch5}, we treat the problem~\eqref{valueftion}.

\begin{remark}
In~\eqref{F}, $F_{n-1}(\pi)$ denotes a conditional expectation of $V_{n-1}$ evaluated over a strong Markov process $\Pi_t$ starting from the value $\pi$ and diffusing for $\varepsilon$ units ($\ep$ is analogous to $\delta$ in~\eqref{introdiffusioneq} by letting $\ep=\frac{\delta}{N}$.). The conditional expectation $\E_\pi[V_{n-1}]$ is a function of $\pi$ and it denotes an expectation of the value function $V_{n-1}$ over the diffusion process
\[
    \Pi_\varepsilon = \pi + \int_0^\varepsilon \rho\Pi_t(1-\Pi_t) \dif \tilde W_t,
\]
for some starting point $\pi$, and so it models the learning-from-the-past effect as delayed information after stopping (see, for example, \cite{oks2005delay} for treatment of an optimal stopping problem with delayed information). 
    
\end{remark}

\section{Finding a candidate solution}\label{ch3}

We first have the following result.

\begin{lemma}\label{lemmaFproperties}
    Let $g_n$, $V_n$ and $F_n$ be as in~\eqref{valueftion}--\eqref{F}. Then, for all $n=1,\ldots, N$, the following hold:
    \begin{enumerate}[{(i)}]
        \item $g_n$, $V_n$, and $F_n$ are convex functions,
        \item $n(\pi-k)^+\leq\max\{0,g_n(\pi)\}\leq V_n(\pi)\leq F_n(\pi)\leq n(1-k)\pi$.\label{lemmaFpropertiesii}
    \end{enumerate}
    \begin{proof}
We note that $g_1(\pi):=\pi-k$ is convex. By arguments for preservation of convexity for martingale diffusion processes in \cite{jansontysk-volatilitytime}, an expected value $\E_\pi[g_1(\Pi_t)]$ is convex in $\pi$ for every fixed time-point $t$ provided that $g$ is a convex function. Moreover, by a Bermudan approximation argument (see \cite{EKSTROM2004265}), preservation of convexity extends to the corresponding stopping problem, so $V_1(\pi)$ is convex. Then, by Jensen's inequality, $F_1\geq V_1$, and clearly $V_1\geq \pi-k$. Convexity of $F_1$ follows from convexity of $V_1$. 

Next, assume that $g_{n-1}, V_{n-1}$, and $F_{n-1}$ are convex. It follows that $g_n=\pi-k + F_{n-1}$ is also convex, and repeating the preservation of convexity and Bermudan approximation arguments yields that $V_n$ is convex, and so the convexity of $F_n$ follows. It follows that Jensen's inequality asserts $F_n\geq V_n$. That is, by induction, we have that $g_n$, $V_n$, and $F_n$ are convex for all $n$, and $g_n\leq V_n\leq F_n$. 
        
Moreover, since $(\pi-k)^+=\max\{g_1(\pi),0\}\leq (1-k)\pi$, we have 
\[
0\leq (\pi-k)^+ \leq V_1(\pi) \leq F_1(\pi) \leq (1-k)\pi.
\]
Assuming that 
\[0\leq (n-1)(\pi-k)^+\leq F_{n-1}(\pi) \leq (n-1)(1-k)\pi,\]
one sees that $n(\pi-k)^+\leq\max\{0,g_n(\pi)\}$ and $g_n=\pi-k +F_{n-1}(\pi)\leq n(1-k)\pi$, and then $V_n\leq F_n\leq n(1-k)\pi$.
The second statement thus follows by induction.
    \end{proof}
\end{lemma}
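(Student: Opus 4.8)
The plan is to prove both statements by induction on $n$, following the recursive structure of the definitions in~\eqref{valueftion}--\eqref{F}, and to lean on two external preservation-of-convexity results for martingale diffusions. The key observation is that $\Pi_t$ is a martingale diffusion (its dynamics~\eqref{pidynamics} have no drift), so that expectations of convex functions of $\Pi_t$ remain convex. The two ingredients I would cite are: (a) preservation of convexity under the diffusion semigroup (expectation of a convex payoff against $\Pi_t$ stays convex for each fixed $t$), and (b) the extension of this property from the static expectation to the optimal stopping value via a Bermudan (discrete-time) approximation, writing the stopping value as an increasing limit of finitely-many-exercise-date problems, each of which is a composition of maxima with convex-preserving semigroup applications.

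First I would handle the base case $n=1$. Here $g_1(\pi)=\pi-k$ is affine, hence convex; applying ingredient~(a) shows $\E_\pi[g_1(\Pi_t)]$ is convex for each $t$, and ingredient~(b) upgrades this to convexity of $V_1$. Then Jensen's inequality applied to the convex function $V_1$ gives $F_1(\pi)=\E_\pi[V_1(\Pi_\ep)]\ge V_1(\E_\pi[\Pi_\ep])=V_1(\pi)$, using that $\Pi_t$ is a martingale with $\E_\pi[\Pi_\ep]=\pi$. Convexity of $F_1$ is inherited from convexity of $V_1$ through the same semigroup argument. The elementary bound $V_1\ge(\pi-k)^+$ follows since stopping immediately (or never) is admissible, and the upper bound $F_1\le(1-k)\pi$ follows by tracking the affine envelope $(\pi-k)^+\le(1-k)\pi$ through the operations.

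For the inductive step, I would assume $g_{n-1},V_{n-1},F_{n-1}$ are convex and satisfy the displayed two-sided bound. Convexity of $g_n=\pi-k+F_{n-1}$ is immediate from convexity of $F_{n-1}$ (the affine piece $\pi-k$ does not disturb convexity); then ingredients~(a) and~(b) give convexity of $V_n$, and hence of $F_n$ via Jensen as before, with $F_n\ge V_n$. For the chain of inequalities in~(ii), I would propagate the assumed bounds: from $(n-1)(\pi-k)^+\le F_{n-1}$ I obtain $g_n=(\pi-k)+F_{n-1}\ge(\pi-k)+(n-1)(\pi-k)^+$, and combining with a sign analysis yields $n(\pi-k)^+\le\max\{0,g_n\}$; the middle inequality $\max\{0,g_n\}\le V_n$ holds because stopping immediately is admissible; $V_n\le F_n$ is Jensen as above; and the top bound comes from $g_n=(\pi-k)+F_{n-1}\le(1-k)\pi+(n-1)(1-k)\pi=n(1-k)\pi$, which survives the $\sup$ and the expectation since these operations are monotone and preserve the linear upper envelope.

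The main obstacle is ingredient~(b): rigorously transferring preservation of convexity from the fixed-time expectation to the optimal stopping value. The delicacy is that a supremum of convex functions is convex, but $V_n$ is a supremum over stopping times of expectations $\E_\pi[e^{-r\tau}g_n(\Pi_\tau)]$, and it is not obvious that each such expectation (for a fixed, possibly path-dependent $\tau$) is convex in $\pi$. The Bermudan approximation circumvents this by restricting to finitely many exercise dates, where the value is built by alternating the convexity-preserving semigroup operator with pointwise maxima against the convex immediate payoff, both of which preserve convexity; one then passes to the limit using that the Bermudan values increase to the continuous-time value. I would invoke~\cite{EKSTROM2004265} for this step rather than reconstruct it. A secondary subtlety worth a remark is that the learning-from-the-past term $F_{n-1}(\pi)=\E_\pi[V_{n-1}(\Pi_\ep)]$ enters the payoff $g_n$, so convexity must be established in the correct order (first $F_{n-1}$, then $g_n$, then $V_n$), which the induction hypothesis supplies cleanly.
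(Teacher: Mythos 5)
Your proposal is correct and takes essentially the same route as the paper's own proof: induction on $n$, convexity of $V_n$ obtained by combining preservation of convexity for the martingale diffusion $\Pi_t$ with a Bermudan approximation (citing the same reference), Jensen's inequality together with the martingale property $\E_\pi[\Pi_\ep]=\pi$ to get $V_n\le F_n$, and propagation of the two-sided affine bounds through the recursion. No gaps to flag.
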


\begin{remark}
Note that it follows from the bounds presented in Lemma~\ref{lemmaFproperties} (\ref{lemmaFpropertiesii}) that $V_n(0+)=F_n(0+)=0$ and $V_n(1-)=F_n(1-)=n(1-k)$. Moreover, Lemma~\ref{lemmaFproperties} implies that the first derivative of $V_n$ is bounded.
\end{remark}

For an illustration of the relationship between $V_1$ and $F_1$ presented in Lemma~\ref{lemmaFproperties}, see Figure~\ref{fig:lines}. 

\begin{figure}[H]
    \centering
    \captionsetup{width=.8\textwidth}
    \makebox[\textwidth][c]{\includegraphics[width=0.7\textwidth]{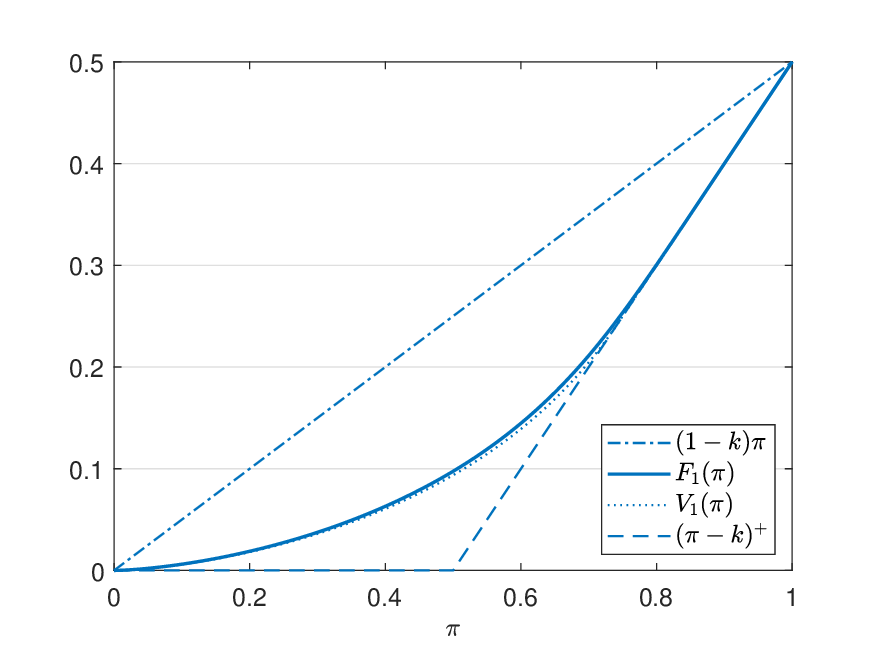}}%
    \caption{Functions $V_1(\pi)$ and $F_1(\pi)$ , together with their shared upper bound $(1-k)\pi$ and lower bound $(\pi-k)^+$. Parameters used are: $\mu_0=-1$, $\mu_1=1$, $\sigma=4$, $r=0.1$, $N=10$, and $N\ep =1$.}
    \label{fig:lines}
\end{figure}

\begin{remark}
    The function $F_1(\pi)$ in Figure~\ref{fig:lines} is produced numerically. For a short discussion on numerical methods used in this article, see Remark~\ref{rmk:numerics}.
\end{remark}

\subsection*{Case $n=1$}\label{sec:step1}
Consider
\[
V_1(\pi)=\sup_\tau \E\left[e^{-r \tau}\left(\Pi_\tau-k\right)\right]
\]
as given in equation~\eqref{valueftion}. Since this is a value function of a call option type, we expect the optimal strategy to be given by a stopping time 
\[
\tau = \inf\{t\geq 0 : \Pi_t \geq b_1\}
\]
for some boundary $b_1$. By standard methods in optimal stopping theory and dynamic programming (see, for example, \cite{S1973}), one expects $V_1$ to solve the corresponding free-boundary problem: 
\begin{equation}\label{step1-free-boundary-problem}
    \begin{cases}
        \left(\L V_1\right)(\pi)= 0, \quad & \text{ on } \pi < b_1, \\
        V_1(\pi) = \pi-k, & \text{ at } \pi = b_1, \\
        V_1'(\pi) = 1, & \text{ at } \pi = b_1, \\
        V_1(0) = 0,
    \end{cases}
\end{equation}
where the differential operator $\L$ is given by 
\begin{equation}\label{step1generator}
    \L :=\frac{\rho^2\pi^2(1-\pi)^2}{2}\frac{\dif^2}{\dif\pi^2}-r.    
\end{equation}
The general solution to the ODE in \eqref{step1-free-boundary-problem} is of type 
\[
A_1 (1-\pi)\left(\frac{\pi}{1-\pi}\right)^{\gamma} + B_1 (1-\pi)\left(\frac{\pi}{1-\pi}\right)^{\gamma_-}
\]
for constants $A_1$ and $B_1$, where $\gamma$ and $\gamma_-$ are the positive and the negative solutions to the quadratic equation $\gamma^2-\gamma-\frac{2r}{\rho^2}=0$. From the boundary condition at $\pi=0$ we see that $B_1\equiv 0$. We denote 
\begin{equation}\label{G}
G(\pi) := (1-\pi)\left(\frac{\pi}{1-\pi}\right)^{\gamma},
\end{equation}
for which
\begin{equation}\label{Gbasicproperties}
    G(0)=0 \quad \text{ and } \left(\L G\right)(\pi)=0.
\end{equation}
Using this notation, the value function assumes the form 
\[
V_1(\pi) = A_1G(\pi)
\]
in the continuation region. Plugging in the boundary conditions at $b_1$ yields
\begin{equation}\label{step1_smootfitcond}
   \begin{cases}
    A_1G(b_1) = b_1-k \\
    A_1G'(b_1) = 1.
\end{cases} 
\end{equation}
By noting that
\begin{equation}\label{Gprime}
    G'(\pi) = \frac{\gamma-\pi}{\pi(1-\pi)}G(\pi)
\end{equation}
one uses the smooth fit equations~\eqref{step1_smootfitcond} to derive
\begin{equation}\label{c}
    b_1 = \frac{\gamma k}{\gamma + k - 1 }\geq k.
\end{equation}
This corresponds to the candidate value function assuming the form
\begin{equation}\label{step1candidate}
    \hat V_1(\pi) := 
    \begin{cases}
    \pi - k, \quad &\text{ if }  \pi\geq b_1, \\
    \frac{b_1-k}{G(b_1)}G(\pi), \quad &\text{ if } \pi< b_1.
    \end{cases}
\end{equation}

Verifying that $\hat V_1 \equiv V_1$ is straightforward, see Proposition~\ref{step1verification} in Section~\ref{ch5}.

\subsection*{Case $1 < n \leq N$}\label{sec:stepn}

Now, for a general $n = 1,\ldots, N$, consider
\[
V_n(\pi) = \sup_\tau \E\left[e^{-r\tau}g_{n}(\Pi_\tau)\right] = \sup_\tau \E\left[e^{-r\tau}(\Pi_\tau-k+F_{n-1}(\Pi_\tau))\right]
\]
as given in~\eqref{valueftion}. Similarly as above, for each $n$, we expect the optimal stopping time to be of the form
\[
\tau_n = \inf\{t\geq 0: \Pi_t \geq b_n\}
\]
for some boundary $b_n$. In particular, we expect the value function $V_n$ to solve the free boundary problem
\begin{equation}\label{stepn-free-boundary-problem}
    \begin{cases}
        \left(\L V_n\right)(\pi) = 0, \quad & \text{ on } \pi < b_n, \\
        V_n(\pi) = g_{n}(\pi), & \text{ at } \pi = b_n, \\
        V_n'(\pi) = g'_{n}(\pi), & \text{ at } \pi = b_n, \\
        V_n(0) = 0.
    \end{cases}
\end{equation}
As in the case $n=1$ above, the smooth-fit guess gives us
\[
\begin{cases}
    A_nG(b_n) = g_{n}(b_n) \\
    A_nG'(b_n) = g_{n}'(b_n)
\end{cases}
\]
for a constant $A_n$ and $G(\pi)$ as in \eqref{G}. This yields an equation
\begin{equation}\label{stepn_smoothfiteqn}
G'(b_n)g_n(b_n) - G(b_n) g'_n(b_n) = 0.    
\end{equation}

If the smooth fit equation~\eqref{stepn_smoothfiteqn} admits a unique solution $b_n$, we then define a candidate value function as 
\begin{equation}\label{stepncandidate}
    \hat V_n(\pi) := 
    \begin{cases}
    g_n(\pi), \quad &\text{ if }  \pi\geq b_n, \\
    \frac{g_n(b_n)}{G(b_n)}G(\pi), \quad &\text{ if } \pi< b_n.
    \end{cases}
\end{equation}

The relationship between $F_n$, $V_n$, $g_n$, and $F_{n-1}$ in the case $n=3$ is illustrated in Figure~\ref{fig:lines2}. From the figure it can also be seen how the value functions $V_n$ coincide with respective payoff functions $g_n$ at the respective boundary points $b_n$ for $n=1,2,3$.

\begin{figure}[H]
    \centering
    \captionsetup{width=.8\textwidth}
    \makebox[\textwidth][c]{\includegraphics[width=0.7\textwidth]{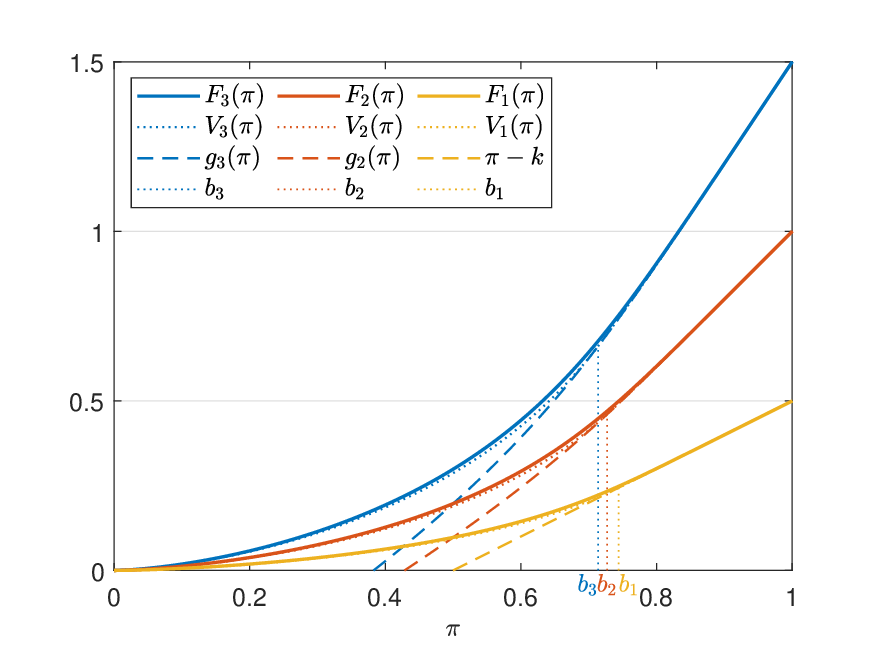}}%
    \caption{Functions $V_n(\pi)$, $F_n(\pi)$, and $g_n(\pi)$ for $n=1,2,3$, where we notate $g_1(\pi):=\pi-k$, and boundaries $b_3,b_2,$ and $b_1$. Parameters used are: $\mu_0=-1$, $\mu_1=1$, $\sigma=4$, $r=0.1$, $N=10$, and $N\ep =1$.}
    \label{fig:lines2}
\end{figure}

\section{Study of the smooth fit equations}\label{ch4}

We proceed by studying the solvability of equation~\eqref{stepn_smoothfiteqn}. In order to do so, we first need some technical results.

\begin{lemma}\label{stepnlemmaF}
Let $f(\pi):=\E_\pi[v(\Pi_\eps)]$ for some $C^2((0,1))$ function $v$ that has a bounded first derivative. 
\begin{equation}
   \text{If $\L v\leq 0$, then $\L f\leq 0$,}   
    \end{equation}
    and 
    \begin{equation}
     \text{if $\L v$ is decreasing in $\pi$, then $\L f$ is decreasing in } \pi.
    \end{equation}
    \end{lemma}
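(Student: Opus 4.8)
My plan is to reduce both assertions to a single commutation identity between $\L$ and the transition semigroup of $\Pi$, and then to read off the two conclusions from elementary positivity and monotonicity of that semigroup. First I would write $\L = \A - r$, where $\A := \frac{\rho^2\pi^2(1-\pi)^2}{2}\frac{\dif^2}{\dif\pi^2}$ is the infinitesimal generator of the diffusion $\Pi$ in~\eqref{pidynamics}, and let $(P_t)_{t\ge0}$ denote its transition semigroup, $(P_t w)(\pi):=\E_\pi[w(\Pi_t)]$, so that $f = P_\eps v$. Everything then hinges on the commutation identity
\begin{equation}\label{commutation}
    \L f(\pi) = \E_\pi\left[(\L v)(\Pi_\eps)\right] = (P_\eps \L v)(\pi).
\end{equation}
Heuristically this is nothing but the fact that a generator commutes with its own semigroup: since $r$ is constant and $P_\eps$ is linear, $\L P_\eps v = \A P_\eps v - r P_\eps v = P_\eps \A v - r P_\eps v = P_\eps(\A v - rv) = P_\eps \L v$.

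To make~\eqref{commutation} rigorous I would argue through the Kolmogorov backward equation. Setting $F(t,\pi):=\E_\pi[v(\Pi_t)]$, the hypotheses that $v\in C^2((0,1))$ with bounded first derivative should guarantee (via Feynman--Kac for the one-dimensional diffusion $\Pi$) that $F$ is smooth and solves $\partial_t F = \A F$ with $F(0,\cdot)=v$. The function $H:=\L F = \partial_t F - rF$ then solves the \emph{same} backward equation $\partial_t H = \A H$ but started from $H(0,\cdot)=\L v$, so by the stochastic representation of its solution $H(t,\pi)=\E_\pi[(\L v)(\Pi_t)]$, and evaluating at $t=\eps$ yields~\eqref{commutation}. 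I expect this justification to be the main obstacle. The diffusion coefficient $\rho\pi(1-\pi)$ degenerates at $\pi=0,1$, so one must verify that these boundaries are unattainable in finite time (hence $\Pi_\eps\in(0,1)$ almost surely and $v,\L v$ are evaluated where they are defined) and that the bounded-derivative assumption delivers the integrability needed both to differentiate under the expectation and to invoke the stochastic representation; note that although $v''$ may blow up near the endpoints, it is multiplied by the vanishing factor $\pi^2(1-\pi)^2$, which is what keeps $\L v$ controlled. A more hands-on alternative would be to differentiate $f(\pi)=\E_\pi[v(\Pi_\eps)]$ twice in the initial condition $\pi$ using the first- and second-variation flows of $\pi\mapsto\Pi_\eps^\pi$, but this carries the same integrability burden.

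With~\eqref{commutation} established, both conclusions are immediate. For the first, $P_\eps$ is a positive operator --- the expectation of a nonpositive random variable is nonpositive --- so $\L v\le 0$ forces $\L f = P_\eps\L v\le 0$. For the second, I would invoke pathwise monotonicity of the flow: since the one-dimensional SDE $\dif\Pi_t = \rho\Pi_t(1-\Pi_t)\dif\tilde W_t$ has a locally Lipschitz coefficient on $(0,1)$, the comparison theorem gives $\Pi_\eps^{\pi_1}\le\Pi_\eps^{\pi_2}$ almost surely whenever $\pi_1\le\pi_2$ (driven by the same Brownian motion). Consequently, if $\L v$ is decreasing then $(\L v)(\Pi_\eps^{\pi_1})\ge(\L v)(\Pi_\eps^{\pi_2})$ pathwise, and taking expectations shows $\L f = P_\eps\L v$ is decreasing. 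Observe that this second step uses no smoothness of $\L v$, only its monotonicity, so the $C^2$ hypotheses on $v$ enter solely to license the commutation step~\eqref{commutation}.
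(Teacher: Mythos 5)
Your proposal is correct and follows essentially the same route as the paper: the paper's proof likewise hinges on the Feynman--Kac/backward-equation identity $(\L f)(\pi) = \E_\pi\left[(\L v)(\Pi_\eps)\right]$ (derived there via It\^o's formula and the relation $\L \tilde f = \tilde f_t$ for $\tilde f(t,\pi)=\E_\pi\left[e^{-rt}v(\Pi_t)\right]$), together with the same non-crossing/comparison property of one-dimensional diffusion paths for the monotonicity claim. The only difference is organizational: the paper handles the first claim through the supermartingale property of $e^{-rt}v(\Pi_t)$ rather than through positivity of the semigroup applied to your commutation identity, which is mathematically the same fact.
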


    \begin{proof}
        For the first claim, let $\tilde f(t,\pi) = \E_\pi\left[e^{-r t} v(\Pi_t)\right]$ so that $f(\pi)=e^{r\ep}\tilde f(\ep,\pi)$. Since $\L v\leq 0$ and the first derivative of $v$ is bounded, $e^{-rt}v(\Pi_t)$ is a supermartingale,
        and $\tilde f(t,\pi)$ is decreasing in $t$. 
        Therefore, $\left(\L \tilde f\right) (t,\pi) = \tilde f_t(t,\pi) \leq 0$, and consequently
        \[
        (\L f)(\pi) = e^{r \varepsilon} \left(\L \tilde f \right)(\varepsilon, \pi)\leq 0.
        \]

        For the second claim, by Itô's formula we get 
        \begin{equation}\label{stepnlemmaF_ito}
            \tilde f(t,\pi) = \E_\pi\left[e^{-rt}v(\Pi_t)\right] = v(\pi) + \E_\pi\left[\int_0^t e^{-rs} (\L v)(\Pi_s)\dif s\right].
        \end{equation}
        Differentiating $\tilde f$ with respect to $t$ yields
        \[
            \tilde f_t(t,\pi) = e^{-rt} \E_\pi\left[(\L v)(\Pi_t)\right]= e^{-rt} \E\left[(\L v)(\Pi^\pi_t)\right],
        \]
        where the notation $\Pi^\pi_t$ is used to indicate that the starting point of $\Pi$ is $\pi$. Using a non-crossing property of paths (see \cite[Chapter IX.3]{RY}) and monotonicity of $\L v$, we find that 
        $\tilde f_{t\pi}(t,\pi)\leq 0$. Consequently
        \[
            \partial_\pi\left(\L f\right)(\pi) = e^{r\varepsilon} \partial_\pi\left(\L \tilde f\right)(\varepsilon,\pi) = e^{r\varepsilon}\partial_\pi \tilde f _t(\varepsilon,\pi) \leq 0,
        \]
        that is, $\left(\L f\right)(\pi)$ is decreasing in $\pi$. 
    \end{proof}

\begin{remark}\label{rmk:c2valueftion}
    In the following, we apply Lemma~\ref{stepnlemmaF} to the function $v=V_n$ which is not in $C^2$ but merely in $C^1((0,1)) \cap C^2((0,b_n)\cup (b_n,1))$. However, a closer inspection of the proof of the lemma shows that the conclusion also holds in this case.  
\end{remark}

For the remainder of Section~\ref{ch4}, we work under the following assumption.
\begin{assumption}\label{ass}
For a given $n\geq2$, we assume that $\L V_{n-1}(\pi)\le 0$ and $\L V_{n-1}(\pi)$ is decreasing.
\end{assumption}
That is, we proceed to show that under Assumption~\ref{ass}, the stopping problem with $n$ remaining investments with a one-sided boundary determined by smooth fit is solved. We then use induction to show that the recursive stopping problem is indeed solved for all $n=1,\ldots,N$.

Lemma~\ref{stepnlemmaF} implies the following result, for which recall that
    \[
    (\L F_{n})(\pi) = \frac{\rho^2 \pi^2 (1-\pi)^2}{2}F''_n(\pi) - rF_n(\pi).
    \]
\begin{proposition}\label{stepn-prop-g}
Assume that Assumption~\ref{ass} holds. Then $\left(\L g_n\right)(\pi)$ is strictly decreasing. Moreover, there exist unique solutions $\pi_n^0$ and $\pi_n^*$ of $g_n(\pi^0_n) = 0$ and $\left(\L g_n\right)(\pi_n^*)=0$, respectively. Furthermore, $\pi_n^*\in(0,k]$ and $\pi_n^0\leq \pi_n^*\leq k$.

    \begin{proof}
        By Lemma~\ref{stepnlemmaF}, if $\left(\L V_{n-1}\right)(\pi)$ is decreasing then $(\L F_{n-1})(\pi)$ is decreasing in $\pi$, and thus
        \begin{equation}\label{stepnlemmaF_neq}
        (\L g_n)(\pi) = -r(\pi-k)+(\L F_{n-1})(\pi)
        \end{equation}
        is strictly decreasing. In addition, if $g_n(\pi)< 0$, we have
        \[
        (\L g_n)(\pi) = -r(\pi-k) + \frac{\rho^2 \pi^2 (1-\pi)^2}{2}F''_{n-1}(\pi) - rF_{n-1}(\pi) = \frac{\rho^2 \pi^2 (1-\pi)^2}{2}F''_{n-1}(\pi) - rg_n(\pi)> 0.
        \]
        Moreover, by~\eqref{stepnlemmaF_neq} we have that, at $\pi=k$,
        \[
        (\L g_n)(k) = (\L F_{n-1})(k) \leq 0.
        \]
        That is, $(\L g_n)(\pi)$ is strictly decreasing, and it  satisfies $(\L g_n)(\pi)> 0$ for small $\pi$. Moreover, at $k$ it is non-positive, which shows that a unique solution $\pi_n^*$ to $\left(\L g_n\right)(\pi^*_n)=0$ exists, and also that $\pi_n^*\in(0,k]$. 
        
        To show the remaining claim, it suffices to note that 
        \[
        (\L g_n)(\pi_n^0) = \frac{\rho^2 {(\pi_n^0)}^2 \left(1-{\pi_n^0}\right)^2}{2}F''_{n-1}(\pi_n^0)\geq 0,
        \]
        so $\pi_n^0\leq \pi_n^*$.
    \end{proof}
\end{proposition}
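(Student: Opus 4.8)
The plan is to assemble the three assertions directly from the two lemmas already established, using the decomposition $g_n = \pi - k + F_{n-1}$ together with the fact that $\L$ annihilates the second derivative of the affine part, so that $(\L g_n)(\pi) = -r(\pi - k) + (\L F_{n-1})(\pi)$. First I would prove the strict monotonicity of $\L g_n$: Assumption~\ref{ass} supplies that $\L V_{n-1}$ is decreasing, and Lemma~\ref{stepnlemmaF} (applied to $v = V_{n-1}$, which is only $C^1$ across $b_{n-1}$, so I would invoke Remark~\ref{rmk:c2valueftion}) then gives that $\L F_{n-1}$ is decreasing. Since $-r(\pi - k)$ is strictly decreasing, the sum is strictly decreasing.

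For the existence and uniqueness of $\pi_n^*$, strict monotonicity already forces at most one zero, so it remains only to produce a sign change. Near the origin one has $g_n(0+) = -k < 0$ by the bound $F_{n-1}(0+) = 0$ from Lemma~\ref{lemmaFproperties}, hence $g_n < 0$ on a right-neighborhood of $0$; there, writing $(\L g_n)(\pi) = \tfrac{\rho^2 \pi^2 (1-\pi)^2}{2} F_{n-1}''(\pi) - r g_n(\pi)$ and using $F_{n-1}'' \geq 0$ (convexity) together with $-r g_n > 0$, I get $(\L g_n)(\pi) > 0$. At $\pi = k$ the affine term vanishes, so $(\L g_n)(k) = (\L F_{n-1})(k) \leq 0$ by the first conclusion of Lemma~\ref{stepnlemmaF} (since $\L V_{n-1} \leq 0$). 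A strictly decreasing function that is positive near $0$ and nonpositive at $k$ has a unique zero $\pi_n^* \in (0,k]$.

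The point $\pi_n^0$ I would locate from convexity of $g_n$ together with the boundary values $g_n(0+) = -k < 0$ and $g_n(1-) = n(1-k) > 0$ (both read off from Lemma~\ref{lemmaFproperties}); existence of a zero is then just continuity. Uniqueness is the only place needing a short argument: the zero set of a convex function is an interval, and were it nondegenerate the function would be nonincreasing immediately to its left, forcing $g_n \geq 0$ there and contradicting $g_n(0+) < 0$, so the zero is a single point $\pi_n^0$. Finally, to order the two points I would evaluate at $\pi_n^0$: since $g_n(\pi_n^0) = 0$ the zeroth-order term drops and $(\L g_n)(\pi_n^0) = \tfrac{\rho^2 (\pi_n^0)^2 (1-\pi_n^0)^2}{2} F_{n-1}''(\pi_n^0) \geq 0 = (\L g_n)(\pi_n^*)$; strict monotonicity of $\L g_n$ then yields $\pi_n^0 \leq \pi_n^*$, completing the chain $\pi_n^0 \leq \pi_n^* \leq k$.

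I expect the only genuine obstacle to be the uniqueness of $\pi_n^0$, since it is the single step that must extract a uniqueness statement from plain convexity rather than from the strict monotonicity of $\L g_n$; the remaining steps are bookkeeping combining the sign of $\L F_{n-1}$, the convexity of $F_{n-1}$, and the boundary values recorded after Lemma~\ref{lemmaFproperties}.
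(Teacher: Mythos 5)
Your proposal is correct and follows essentially the same route as the paper's proof: the same decomposition $(\L g_n)(\pi) = -r(\pi-k) + (\L F_{n-1})(\pi)$, the same use of Lemma~\ref{stepnlemmaF} under Assumption~\ref{ass} for strict monotonicity, the same sign argument ($\L g_n > 0$ where $g_n < 0$ via convexity of $F_{n-1}$, and $(\L g_n)(k) = (\L F_{n-1})(k) \leq 0$), and the same evaluation at $\pi_n^0$ to conclude $\pi_n^0 \leq \pi_n^*$. The only difference is that you explicitly establish existence and uniqueness of $\pi_n^0$ from the boundary values $g_n(0+) = -k$, $g_n(1-) = n(1-k)$ and convexity, a step the paper's proof leaves implicit; this is a welcome addition rather than a departure.
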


To show that the equation~\eqref{stepn_smoothfiteqn} indeed has a unique solution, we define 
\begin{equation}\label{stepn_h}
    h_n(\pi) := G'(\pi)g_n(\pi) - G(\pi)g'_n(\pi)
\end{equation}
for $\pi\in(0,1)$. Recall that we expect that the boundary solving the $n$th free-boundary problem~\eqref{stepn-free-boundary-problem} is a solution to the equation $h_n(\pi)=0$.

\begin{proposition}\label{stepn_prop_uniquebn}
   Assume that Assumption~\ref{ass} holds. Then, there exists a unique solution $b_n$ to the equation $h_n(b_n)=0$. Moreover, $b_n\in(\pi^*_n,b_1]$.
    \end{proposition}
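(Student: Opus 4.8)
The plan is to study $h_n$ through its derivative, exploiting $\L G=0$. Writing $a(\pi):=\tfrac{\rho^2\pi^2(1-\pi)^2}{2}$ so that $\L=a(\pi)\tfrac{\dif^2}{\dif\pi^2}-r$, differentiating $h_n=G'g_n-Gg_n'$ cancels the cross terms and leaves $h_n'=G''g_n-Gg_n''$. Substituting $G''=\tfrac{r}{a}G$ (from $\L G=0$) and $g_n''=\tfrac1a(\L g_n+rg_n)$ gives the key identity
\[
h_n'(\pi)=-\frac{G(\pi)}{a(\pi)}\,(\L g_n)(\pi).
\]
Since $G>0$ and $a>0$ on $(0,1)$, the sign of $h_n'$ is opposite to that of $\L g_n$. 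By Proposition~\ref{stepn-prop-g}, $\L g_n$ is strictly decreasing with unique zero $\pi_n^*$, so $\L g_n>0$ on $(0,\pi_n^*)$ and $\L g_n<0$ on $(\pi_n^*,1)$. Hence $h_n$ is strictly decreasing on $(0,\pi_n^*)$ and strictly increasing on $(\pi_n^*,1)$, with a unique minimum at $\pi_n^*$.

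Next I would pin down the endpoint behaviour. Since $\gamma>1$ (the positive root of $\gamma^2-\gamma-2r/\rho^2=0$), both $G(\pi)\sim\pi^\gamma$ and $G'(\pi)\sim\gamma\pi^{\gamma-1}$ vanish as $\pi\to0+$; together with $g_n(0+)=-k$ and boundedness of $g_n'$ (Lemma~\ref{lemmaFproperties}) this gives $h_n(0+)=0$. Combined with strict monotonicity on $(0,\pi_n^*)$ this shows $h_n<0$ on $(0,\pi_n^*]$; in particular $h_n(\pi_n^*)<0$ and there is no zero in $(0,\pi_n^*]$.

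The main obstacle is to show $h_n$ does become nonnegative by $b_1$, i.e.\ $h_n(b_1)\ge0$. Here I would use the additive decomposition $h_n=h_1+\tilde h_n$, where $h_1(\pi)=G'(\pi)(\pi-k)-G(\pi)$ corresponds to $g_1=\pi-k$ and $\tilde h_n(\pi):=G'(\pi)F_{n-1}(\pi)-G(\pi)F_{n-1}'(\pi)$; this follows since $g_n=g_1+F_{n-1}$ and $h_n$ is linear in its argument function. The smooth-fit equations defining $b_1$ give exactly $h_1(b_1)=0$, so it suffices to prove $\tilde h_n(b_1)\ge0$. Repeating the differentiation above with $F_{n-1}$ in place of $g_n$ yields $\tilde h_n'=-\tfrac{G}{a}\,\L F_{n-1}$, and Assumption~\ref{ass} together with Lemma~\ref{stepnlemmaF} (applied to $v=V_{n-1}$, cf.\ Remark~\ref{rmk:c2valueftion}) gives $\L F_{n-1}\le0$, so $\tilde h_n$ is non-decreasing. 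Since $F_{n-1}(0+)=0$ and $F_{n-1}(\pi)=O(\pi)$ near $0$ (Lemma~\ref{lemmaFproperties}), the same vanishing estimates give $\tilde h_n(0+)=0$, whence $\tilde h_n\ge0$ on $(0,1)$, and in particular $h_n(b_1)=\tilde h_n(b_1)\ge0$.

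Finally I would assemble the pieces. On $(\pi_n^*,1)$ the function $h_n$ is strictly increasing with $h_n(\pi_n^*)<0\le h_n(b_1)$ and $b_1\in(\pi_n^*,1)$ (note $b_1<1$ since $k<1$), so the intermediate value theorem together with strict monotonicity produces a \emph{unique} zero $b_n\in(\pi_n^*,b_1]$. Since $h_n<0$ on $(0,\pi_n^*]$, this $b_n$ is the unique zero of $h_n$ on all of $(0,1)$, which is the claim. The delicate step is the inequality $h_n(b_1)\ge0$: everything else is monotonicity bookkeeping, whereas that estimate is precisely where Assumption~\ref{ass} (via $\L F_{n-1}\le0$) and the special role of the first-problem boundary $b_1$ enter.
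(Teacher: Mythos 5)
Your proof is correct, and while its overall skeleton matches the paper's (the Wronskian-type identity $h_n'=-\frac{G}{a}\,\L g_n$, monotonicity of $h_n$ down-then-up with minimum at $\pi_n^*$, negativity at $\pi_n^*$, the key inequality $h_n(b_1)\ge 0$, then the intermediate value theorem), you prove the crucial step $h_n(b_1)\ge0$ by a genuinely different argument. The paper picks $D$ with $DG(b_1)=F_{n-1}(b_1)$, notes $\L(DG-F_{n-1})\ge0$ on $(0,b_1)$ with vanishing/matching boundary values, and invokes the maximum principle to get $DG-F_{n-1}\le0$ and hence $G'(b_1)F_{n-1}(b_1)-G(b_1)F_{n-1}'(b_1)\ge0$; you instead exploit linearity of $g\mapsto G'g-Gg'$ to split $h_n=h_1+\tilde h_n$ with $\tilde h_n=G'F_{n-1}-GF_{n-1}'$, apply the same differentiation trick a second time to get $\tilde h_n'=-\frac{G}{a}\,\L F_{n-1}\ge0$ (using Assumption~\ref{ass} and Lemma~\ref{stepnlemmaF} exactly as the paper does), and integrate from $\tilde h_n(0+)=0$. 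Both routes reduce to the same intermediate inequality $\tilde h_n(b_1)\ge0$ combined with the smooth-fit identity $G'(b_1)(b_1-k)-G(b_1)=0$; yours is more uniform (one trick used twice, no second appeal to the maximum principle) and yields the stronger global conclusion $h_n\ge h_1$ on all of $(0,1)$, which transparently encodes $b_n\le b_1$. The price is slightly more delicate boundary analysis at $0+$: you need $G'(0+)=0$ (i.e.\ $\gamma>1$, which holds since $r>0$) and boundedness of $F_{n-1}'$, whereas the paper's comparison only uses function values at the endpoints; likewise your $b_1<1$ aside needs $\gamma>1$ as well as $k<1$. Also note your derivative computations implicitly require $F_{n-1}\in C^2$, but this is consistent with the paper's own use of $F_{n-1}''$ in Proposition~\ref{stepn-prop-g}, so it is not a gap. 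Finally, your treatment of negativity to the left of $\pi_n^*$ (via $h_n(0+)=0$ plus strict decrease) replaces the paper's direct sign check $h_n<0$ on $(0,\pi_n^0)$ where $g_n<0$; both are fine.
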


    \begin{proof}
        Since $G(\pi)g'_n(\pi)\geq 0$ for all $\pi\in(0,b_1)$, we have $h_n(\pi)< 0$ for $\pi\in(0,\pi^0_n)$, where $\pi^0_n$ is the solution to $g_n(\pi^0_n) = 0$. 

        Using $(\L G)(\pi) = 0$, we have
        \begin{align*}
        \frac{\rho^2\pi^2(1-\pi)^2}{2}h_n'(\pi) &= g_n(\pi)\frac{\rho^2\pi^2(1-\pi)^2}{2}G''(\pi)-G(\pi)\frac{\rho^2\pi^2(1-\pi)^2}{2}g_n''(\pi) \\
        &= -G(\pi)(\L g_n)(\pi)\quad
        \begin{cases}
            \leq 0 \quad \text{ if } \pi<\pi_n^*\\
            \geq 0 \quad \text{ if } \pi>\pi_n^*,
        \end{cases} 
        \end{align*}
        where the last inequality pair comes directly from Proposition~\ref{stepn-prop-g}. The sign of
        \[
        \frac{\rho^2\pi^2(1-\pi)^2}{2}h_n'(\pi)
        \]
        coincides with the sign of $h_n'(\pi)$. Consequently, $h_n(\pi)$ is decreasing on $(0,\pi_n^*)$ and increasing on $(\pi_n^*,1)$, so there exists at most one solution of $h_n(b_n)=0$, and for such a solution we must have $b_n>\pi^*_n$.
        We next show that $h_n(b_1)\geq 0$, which then finishes the proof.
        
        To see that $h_n(b_1)\geq 0$, select a constant $D$ such that $D G(b_1) - F_{n-1}(b_1)=0$. Lemma~\ref{stepnlemmaF},
        together with properties of $F_{n-1}$ and $G$ (see Lemma~\ref{lemmaFproperties} and equation~\eqref{Gbasicproperties}, respectively), yields
        \[
        \begin{cases}
            D G(0)-F_{n-1}(0)=0,\\
           D G(b_1)-F_{n-1}(b_1)=0,\\
            \L\left(DG-F_{n-1}\right)(\pi) \geq 0 \quad \text{ for } \pi\in(0,b_1).
        \end{cases}
        \]
        By the maximum principle, it follows that $D G(\pi)-F_{n-1}(\pi)\leq0$ for $\pi\in(0,b_1)$, so 
        \[
        DG'(b_1)-F'_{n-1}(b_1)\geq 0.    
        \]
        Consequently, $G'(b_1)F_{n-1}(b_1)-G(b_1)F_{n-1}'(b_1)\geq 0$, so   
        \begin{eqnarray*}
        h_n(b_1) &=& G'(b_1)(b_1-k+F_{n-1}(b_1))-G(b_1)(1+F_{n-1}'(b_1))\\
        &=& G'(b_1)F_{n-1}(b_1)-G(b_1)F_{n-1}'(b_1)
        \geq  0,
        \end{eqnarray*}
        where the last equality comes from noting that $G'(\pi)(\pi-k)-G(\pi)=0$ is solved by $\pi=b_1$ by~\eqref{c}. This completes the proof.
    \end{proof}

\section{Main results}\label{ch5}

We start by verifying the candidate value function $\hat V_1(\pi)$.

\begin{proposition}\label{step1verification}
    Let $V_1(\pi)$ be as in~\eqref{valueftion} and $\hat V_1(\pi)$ as in~\eqref{step1candidate}. Then $V_1(\pi)\equiv \hat V_1(\pi)$ for all $\pi\in(0,1)$.
    \begin{proof}
        When $\pi\geq b_1$, we have $\hat V_1(\pi)\geq \pi-k$ directly by \eqref{step1candidate}.
        By the convexity of $\hat V_1(\pi)$, it follows that 
        \[
            \hat V_1(\pi)\geq \pi-k
        \]
        also for $\pi< b_1$. Since $b_1\geq k$, when $\pi> b_1$ we have
        \[
            \left(\L \hat V_1\right)(\pi) = \frac{\rho^2 \pi^2(1-\pi)^2}{2}  \hat V_1''(\pi) - r \hat V_1(\pi) \leq -r(b_1-k) \leq 0.
        \]
        Similarly, if $\pi<b_1$, by \eqref{step1-free-boundary-problem}, $\left(\L \hat V_1\right)(\pi) \leq 0$ holds. Therefore, by a standard verification argument (see, for example, \cite{PS} for theory and several examples), we indeed have $V_1 (\pi)\equiv \hat V_1(\pi)$.
    \end{proof}
\end{proposition}

Next, we provide a verification result for $\hat V_n(\pi)$ for an $n=2,\ldots,N$. 

\begin{proposition}\label{stepnverification}
    Assume that Assumption~\ref{ass} holds.
    Let $V_n(\pi)$ be as in~\eqref{valueftion} and $\hat V_n(\pi)$ as in~\eqref{stepncandidate}. Then $V_n(\pi)\equiv \hat V_n (\pi)$ for all $\pi\in(0,1)$. 
    \begin{proof}
        Similarly as in the proof of Proposition~\ref{step1verification}, for the verification argument we need
        \begin{equation}\label{stepnverif1}
            \left(\L \hat V_n\right)(\pi) \leq 0 \quad \text{ for all } \pi
        \end{equation}
        and 
        \begin{equation}\label{stepnverif2}
            \hat V_n(\pi) \geq g_n(\pi) \quad \text{ for all } \pi.
        \end{equation}
        For the condition~\eqref{stepnverif1}, we note that for $\pi< b_n$ we automatically have
        \[
            \left(\L \hat V_n\right)(\pi) = \frac{g_n(b_n)}{G(b_n)}(\L G)(\pi) = 0.
        \]
        On the other hand, when $\pi>b_n$, we have
        \[
        \left(\L \hat V_n \right)(\pi) = \left(\L g_{n}\right)(\pi) \leq 0
        \]
        since $b_n\geq \pi_n^*$ (see Propositions~\ref{stepn-prop-g} and~\ref{stepn_prop_uniquebn}). 
        
        For the condition~\eqref{stepnverif2} we note that if $\pi>b_n$, then by construction we have $\hat V_n (\pi) = g_n(\pi)$. For $\pi\leq b_n$ we argue as follows. 

        First, we claim that $\hat V_n>g_n$ on $[\pi^*_n,b_n)$. In fact, if this was not the case, then there exists 
        $a\in[\pi^*_n,b_n)$ with $\hat V_n(a)\leq g_n(a)$. 
        Since $\L g_n\leq \L\hat V_n=0$ on $[a,b_n]$, the maximum principle yields $V_n\leq g_n$ on $[a,b_n]$. On the other hand, $(\L g_n)(b_n)<0=
        (\L\hat V_n)(b_n)$, which implies that $\hat V_n>g_n$ in a left neighborhood of $b_n$, which is a contradiction. It follows that 
        $\hat V_n>g_n$ on $[\pi^*_n,b_n)$.
        
        Second, we show that $\hat V_n\geq g_n$ on $(0,\pi^*_n)$.
        
        Since $\hat V_n(0)\geq g_n(0)$, $\hat V_n(\pi^*_n)> g_n(\pi^*_n)$ and $\mathcal Lg_n\geq \mathcal L\hat V_n=0$ on $(0,\pi^*_n)$, the maximum principle gives $\hat V_n\geq g_n$ also on $(0,\pi^*_n)$.
        
        Since conditions~\eqref{stepnverif1} and~\eqref{stepnverif2} hold, by standard verification arguments (see note in the proof of Proposition~\ref{step1verification}) we have $\hat V_n (\pi)\equiv V_n(\pi)$.
    \end{proof}
\end{proposition}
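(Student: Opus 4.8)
The plan is to run the same verification scheme as in Proposition~\ref{step1verification}: I would exhibit the candidate $\hat V_n$ from~\eqref{stepncandidate} as the true value function by checking the two defining inequalities $\left(\L\hat V_n\right)(\pi)\le0$ and $\hat V_n(\pi)\ge g_n(\pi)$ for all $\pi\in(0,1)$. Granting these, the generalized It\^o formula makes $e^{-rt}\hat V_n(\Pi_t)$ a supermartingale that dominates $e^{-rt}g_n(\Pi_t)$, so $\hat V_n\ge V_n$; the reverse inequality comes from the hitting time $\tau_n=\inf\{t\ge0:\Pi_t\ge b_n\}$, which restores the martingale property on the continuation region $\{\pi<b_n\}$ and matches $\hat V_n=g_n$ on the stopping region, giving $\hat V_n=\E_\pi[e^{-r\tau_n}g_n(\Pi_{\tau_n})]\le V_n$.

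The inequality $\L\hat V_n\le0$ is the routine half. On $\{\pi<b_n\}$ the candidate is a constant multiple of $G$, so $\L\hat V_n=0$ by~\eqref{Gbasicproperties}. On $\{\pi>b_n\}$ the candidate equals $g_n$, so $\L\hat V_n=\L g_n$, and I would invoke Proposition~\ref{stepn-prop-g} (that $\L g_n$ is strictly decreasing with unique zero $\pi_n^*$) together with $b_n>\pi_n^*$ from Proposition~\ref{stepn_prop_uniquebn} to conclude $\L g_n\le0$ there.

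The hard part will be the dominance $\hat V_n\ge g_n$ on the continuation region, since -- in contrast to the $n=1$ case -- the obstacle $g_n$ is no longer controlled by convexity alone once the curved term $F_{n-1}$ enters. My plan is to split $(0,b_n)$ at the sign-change point $\pi_n^*$ of $\L g_n$ and apply the maximum principle for $\L=\tfrac{\rho^2\pi^2(1-\pi)^2}{2}\tfrac{\dif^2}{\dif\pi^2}-r$ on each piece. On $[\pi_n^*,b_n)$ I would argue by contradiction: if $\hat V_n(a)\le g_n(a)$ for some $a$ there, then $\L(\hat V_n-g_n)=-\L g_n\ge0$ on $[a,b_n]$ while $\hat V_n-g_n\le0$ at both endpoints (the right one by value matching), so the maximum principle forces $\hat V_n\le g_n$ on all of $[a,b_n]$; this is contradicted by a local expansion at $b_n$, where smooth fit matches value and slope while $\L g_n(b_n)<0=\L\hat V_n(b_n^-)$ forces the continuation-side second derivative of $\hat V_n$ to strictly exceed that of $g_n$, so $\hat V_n>g_n$ just to the left of $b_n$. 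Hence $\hat V_n>g_n$ on $[\pi_n^*,b_n)$. On $(0,\pi_n^*)$ the maximum principle applies directly, since there $\L g_n\ge0=\L\hat V_n$ and $g_n-\hat V_n\le0$ at both endpoints (using $\hat V_n(0)=0>-k=g_n(0)$ and the strict inequality just obtained at $\pi_n^*$), yielding $\hat V_n\ge g_n$.

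Finally I would flag the regularity caveat from Remark~\ref{rmk:c2valueftion}: $\hat V_n$ is only $C^1$ with a second-derivative jump at $b_n$, so the It\^o step must be read in the It\^o--Tanaka sense. Because value matching and smooth fit make $\hat V_n'$ continuous across $b_n$, no local-time term is generated, and the supermartingale property follows from the almost-everywhere inequality $\L\hat V_n\le0$. Collecting the two verified conditions then yields $\hat V_n\equiv V_n$ by the verification machinery cited in Proposition~\ref{step1verification}.
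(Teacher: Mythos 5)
Your proposal is correct and follows essentially the same route as the paper's proof: the same two verification conditions, the same use of $b_n>\pi_n^*$ to get $\L g_n\le0$ on the stopping region, and the same split of the continuation region at $\pi_n^*$ with the identical contradiction-plus-maximum-principle argument on $[\pi_n^*,b_n)$ (your Taylor expansion at $b_n$ just spells out the paper's one-line claim that $(\L g_n)(b_n)<0=(\L\hat V_n)(b_n)$ forces $\hat V_n>g_n$ in a left neighborhood). Your added remarks on the supermartingale/hitting-time machinery and the It\^o--Tanaka regularity issue are elaborations of the ``standard verification arguments'' the paper cites, not a different approach.
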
 

To combine the individual verification results~\ref{step1verification} and~\ref{stepnverification} for our main result, we need the following simple proposition. 

\begin{proposition}\label{prop-inductionstep}
Assume that Assumption~\ref{ass} holds. Then, also $(\L V_n)(\pi)$ is decreasing.
\begin{proof}
By the construction of the candidate value function in~\eqref{stepncandidate} and Proposition~\ref{stepnverification}, we have $(\L V_n)(\pi)=0$ for $\pi<b_n$, and $(\L V_n)(\pi)=(\L g_n)(\pi)$ for $\pi>b_n$. Therefore, by Proposition~\ref{stepn-prop-g}, $(\L V_n)(\pi)$ is decreasing.
\end{proof}
\end{proposition}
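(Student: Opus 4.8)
The plan is to exploit the explicit piecewise form of the candidate value function together with the identification $V_n \equiv \hat V_n$ established in Proposition~\ref{stepnverification}. Once we know that $V_n$ coincides with $\hat V_n$ as given in~\eqref{stepncandidate}, the operator $\L V_n$ can be evaluated separately on the continuation region $\{\pi < b_n\}$ and the stopping region $\{\pi > b_n\}$, and the whole task reduces to checking that the resulting two-piece function is (weakly) decreasing on all of $(0,1)$.

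First I would treat the continuation region. For $\pi < b_n$ we have $V_n(\pi) = \frac{g_n(b_n)}{G(b_n)} G(\pi)$, so by linearity of $\L$ and the identity $(\L G)(\pi) = 0$ from~\eqref{Gbasicproperties} we obtain $(\L V_n)(\pi) = 0$ there; in particular $\L V_n$ is constant, hence non-increasing, on $(0,b_n)$. On the stopping region $\pi > b_n$ we have $V_n(\pi) = g_n(\pi)$, whence $(\L V_n)(\pi) = (\L g_n)(\pi)$, which is strictly decreasing by Proposition~\ref{stepn-prop-g}. Thus $\L V_n$ is decreasing on each of the two open intervals separately, and the content of the proposition lies entirely in gluing these two pieces.

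The only point requiring care --- and the main, if modest, obstacle --- is the behavior at the junction $\pi = b_n$, where one must rule out an upward jump of $\L V_n$ (note that $\L V_n$ need not be continuous at $b_n$, since $V_n''$ jumps there even though smooth fit holds). Crossing $b_n$ from left to right, $\L V_n$ passes from the constant value $0$ to the value $(\L g_n)(b_n)$. Here I would invoke Proposition~\ref{stepn_prop_uniquebn}, which gives $b_n > \pi_n^*$, together with Proposition~\ref{stepn-prop-g}, which asserts that $\L g_n$ is strictly decreasing and vanishes at $\pi_n^*$. Combining these yields $(\L g_n)(b_n) < (\L g_n)(\pi_n^*) = 0$, so $\L V_n$ in fact jumps strictly downward at $b_n$. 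Consequently the piecewise-defined $\L V_n$ is decreasing across $b_n$ as well, and hence decreasing on all of $(0,1)$, which completes the argument.
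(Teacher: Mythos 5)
Your proposal is correct and takes essentially the same route as the paper: identify $V_n$ with $\hat V_n$ via Proposition~\ref{stepnverification}, note $(\L V_n)(\pi)=0$ on $\pi<b_n$ and $(\L V_n)(\pi)=(\L g_n)(\pi)$ on $\pi>b_n$, and conclude by the monotonicity of $\L g_n$ from Proposition~\ref{stepn-prop-g}. Your explicit check of the downward jump at the junction $\pi=b_n$ (using $b_n>\pi_n^*$ from Proposition~\ref{stepn_prop_uniquebn} and $(\L g_n)(\pi_n^*)=0$) is a detail the paper leaves implicit but tacitly relies on, so spelling it out is a welcome clarification rather than a departure.
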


Using an induction argument, the following theorem contains the main result of our article. 

\begin{theorem}\label{mainthm}
    For $1<n\leq N$, let $V_1(\pi)$ and $V_n(\pi)$ be as given in equation~\eqref{valueftion}, and $\hat V_1(\pi)$ and $\hat V_n(\pi)$ be as given in equations~\eqref{step1candidate} and~\eqref{stepncandidate}, respectively. Then
    \begin{equation}\label{mainthmequation}
        \hat V_1(\pi) \equiv V_1(\pi) \quad \text{ and } \quad \hat V_n(\pi)\equiv V_n(\pi)
    \end{equation}
    for all $\pi\in(0,1)$ and for all $n=2,\ldots,N$. Moreover, for each $n$, the optimal investment strategy is to invest at random times
    \[
    \begin{cases}
        \tau_1 = \inf \{t\geq 0 : \Pi_t\geq b_1\} \\
        \tau_n = \inf \{t\geq 0 : \Pi_t\geq b_n\},
    \end{cases}
    \]
    characterized by a sequence of stopping boundaries $\{b_n\}_{n=1,\ldots,N}$, where $b_1$ is given by the equation~\eqref{c}, and for each $n=2,\ldots,N$, the boundary $b_n$ is the unique solution of $h_n(b_n)=0$, where $h_n$ is given in equation~\eqref{stepn_h}.
    \begin{proof}
    Claim $\hat V_1(\pi)\equiv V_1(\pi)$ comes directly from Proposition~\ref{step1verification}. It follows that 
    \[
    (\L V_1)(\pi) =
    \begin{cases}
        -r(\pi-k), \quad &\pi \geq b_1\\
        0, \quad &\pi <b_1  
    \end{cases}
    \]
    is decreasing and so by Proposition~\ref{prop-inductionstep}, $(\L V_2)(\pi)$ is also decreasing. Then, for any given $n>2$, assume that $(\L V_{n-1})(\pi)$ is decreasing. This assumption together with Proposition~\ref{prop-inductionstep} implies that also $(\L V_{n})(\pi)$ is decreasing. That is, by induction $(\L V_{n})(\pi)$ is decreasing for all $n$. In addition, Proposition~\ref{stepnverification} then asserts that $\hat V_n(\pi)\equiv V_n(\pi)$ for all $n$ with $n=2,\ldots, N$. The optimality of $\tau_n$ for all $n=1,\ldots, N$ follows by construction.

    \end{proof}
\end{theorem}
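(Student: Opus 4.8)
The plan is to prove the main theorem by induction on $n$, leveraging the fact that all the heavy lifting has already been done in Section~\ref{ch3} under Assumption~\ref{ass}. The key observation is that Assumption~\ref{ass} has two parts—$\L V_{n-1}\le 0$ and $\L V_{n-1}$ decreasing—but the downstream propositions are stated only assuming the latter. So the first thing I would check is what exactly needs to propagate through the induction. Inspecting the proofs, Propositions~\ref{stepn-prop-g}, \ref{stepn_prop_uniquebn}, \ref{stepnverification}, and~\ref{prop-inductionstep} all invoke only the monotonicity clause of Assumption~\ref{ass}, together with the convexity and bounds from Lemma~\ref{lemmaFproperties} (which hold unconditionally). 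Hence the single inductive invariant I would carry is: \emph{$(\L V_{n-1})(\pi)$ is decreasing in $\pi$.}

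The base case is $n=1$. Here Proposition~\ref{step1verification} gives $\hat V_1\equiv V_1$ outright, with no assumption needed. From the explicit candidate~\eqref{step1candidate} I would compute $(\L V_1)(\pi)$, which equals $0$ on $\{\pi<b_1\}$ and $-r(\pi-k)$ on $\{\pi\ge b_1\}$; since $b_1\ge k$, this function is continuous at $b_1$ and clearly decreasing (constant zero, then strictly decreasing linear), establishing the invariant for $n=1$. The inductive step is then immediate: assuming $(\L V_{n-1})$ is decreasing, Proposition~\ref{prop-inductionstep} yields that $(\L V_n)$ is decreasing, which both advances the induction and, via Proposition~\ref{stepnverification}, certifies $\hat V_n\equiv V_n$. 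The existence and uniqueness of each boundary $b_n\in(\pi_n^*,b_1]$ comes from Proposition~\ref{stepn_prop_uniquebn}, so the strategy $\tau_n=\inf\{t\ge0:\Pi_t\ge b_n\}$ is well-defined.

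The one subtlety I would flag carefully concerns the $\L V_{n-1}\le 0$ half of Assumption~\ref{ass}, which the induction on monotonicity alone does not obviously supply for the application of Proposition~\ref{stepnverification} at stage $n$. The resolution is that $\L V_{n-1}\le 0$ follows once $\hat V_{n-1}\equiv V_{n-1}$ is known: from~\eqref{stepncandidate} one has $\L V_{n-1}=0$ on the continuation region and $\L V_{n-1}=\L g_{n-1}\le0$ on the stopping region (since $b_{n-1}\ge\pi_{n-1}^*$), exactly as in the proof of~\eqref{stepnverif1}. So the honest inductive hypothesis to carry is the conjunction ``$\hat V_{n-1}\equiv V_{n-1}$ and $(\L V_{n-1})$ is decreasing,'' from which both clauses of Assumption~\ref{ass} at stage $n$ are recovered, allowing all of Section~\ref{ch3} to be applied and the hypothesis to be re-established at stage $n$. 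I would make this bookkeeping explicit so that invoking Assumption~\ref{ass} inside the induction is not circular.

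I do not expect a genuine obstacle here—the theorem is essentially an assembly of the section's results—so the main ``difficulty'' is purely one of exposition: being scrupulous that when I invoke Proposition~\ref{stepnverification} (which presupposes Assumption~\ref{ass}) at stage $n$, I have already verified \emph{both} parts of that assumption from the strengthened inductive hypothesis, rather than silently assuming them. Once that bookkeeping is in place, the conclusion~\eqref{mainthmequation} and the optimality of the stopping times $\{\tau_n\}$ follow by construction for every $n=1,\dots,N$.
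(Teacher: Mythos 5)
Your proposal is correct and follows essentially the same route as the paper: an induction anchored at Proposition~\ref{step1verification}, propagated by Proposition~\ref{prop-inductionstep}, and closed at each stage by Propositions~\ref{stepn_prop_uniquebn} and~\ref{stepnverification}. One correction to your first paragraph: the monotonicity clause alone is \emph{not} what the downstream propositions use --- Propositions~\ref{stepn-prop-g} and~\ref{stepn_prop_uniquebn} both need $\L V_{n-1}\le 0$ (via the first claim of Lemma~\ref{stepnlemmaF}, to get $(\L F_{n-1})(k)\le 0$ and $\L\left(DG-F_{n-1}\right)\ge 0$, respectively) --- but the strengthened inductive hypothesis in your third paragraph supplies exactly this, so your final argument is sound and indeed more scrupulous on this bookkeeping than the paper's own proof, which explicitly tracks only the decreasing clause.
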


An example of the optimal strategy characterized as boundaries $\{b_n\}_{n=1}^{N}$ is illustrated in the following figure.  

\begin{figure}[H]
    \centering
    \captionsetup{width=.8\textwidth}
    \makebox[\textwidth][c]{\includegraphics[width=0.7\textwidth]{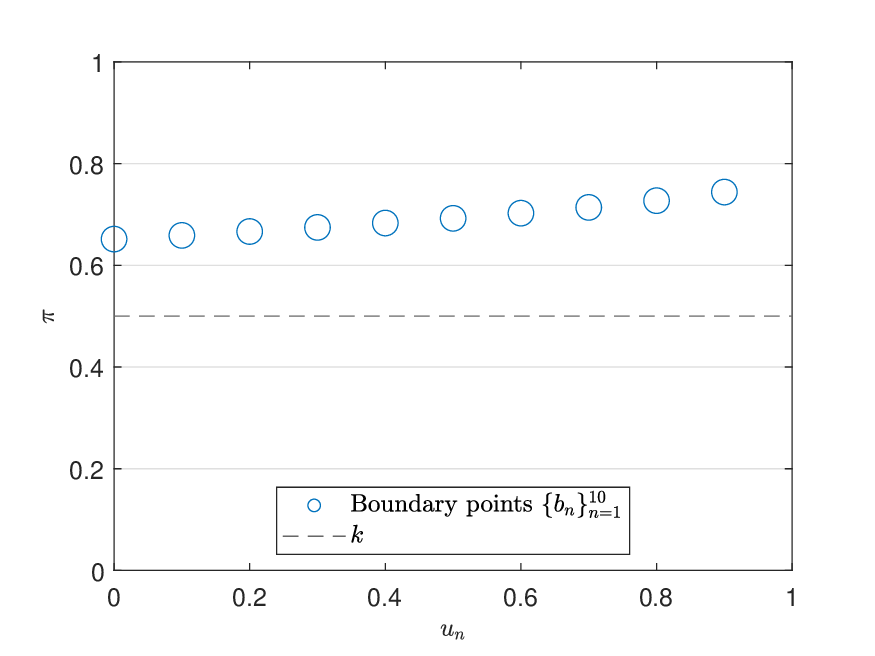}}%
    \caption{The boundaries $b_{10},\ldots,b_1$ corresponding to the control rates $u_{1},\ldots,u_{10}$. The boundaries $\{b_n\}_{n=2}^{10}$ are solved numerically whereas the boundary $b_1$ is given in \eqref{c}. Parameters used are: $\mu_0=-1$, $\mu_1=1$, $\sigma=4$, $r=0.1$, $N=10$, and $N\ep =1$. From the choice of $\mu_0$, $\mu_1$, and $\sigma$, the values $k=0.5$ and $\rho=0.5$ follow.}
    \label{fig:boundaries}
\end{figure}

We finish the study of the \emph{learning-from-the-past} effect by discussing some properties of boundaries $\{b_n\}_{n=1}^N$ with numerical methods.

\section{Comparative statistics}\label{ch6}

We conduct a numerical study on the behavior of boundary $\{b_n\}_{n=1}^N$ with respect to changes in the model parameters. To highlight some of the results, we show that a case with $b_n<k$ is possible (see Figure~\ref{fig:epcomparison}), which shows that it may be optimal to invest in a project with negative expected value (similar observations have been made in \cite{LBD}). Similarly, we show that $b_n$ is not monotone with respect to $\mu_1$ (see Figure~\ref{fig:mucomparison}), demonstrating a mixed effect between $k$  and signal-to-noise ratio $\rho$. We then conclude with an observation that increasing $N$ (and so decreasing learning per individual investment) decreases $b_n$ (see Figure~\ref{fig:largeN}).

\begin{remark}\label{rmk:numerics}
Recall that boundary $b_n$ is given by solving \eqref{stepn_smoothfiteqn}. To solve for the boundaries, functions $F_n$ are solved using a finite differences method. We consider a second-order partial differential equation
\[
    \begin{cases}
        (\tilde F_n)_t(t,\pi) = \frac{\rho^2\pi^2(1-\pi)^2}{2}(\tilde F_n)_{\pi\pi}(t,\pi)\\
        \tilde F_n(0,\pi) = V_n(\pi)
    \end{cases}
\]
with $\tilde F_n(t,\pi):=E_\pi\left[V_n(\Pi_t)\right]$. In particular, $ \tilde F_n(\ep,\pi)\equiv F_n(\pi)$. Moreover, we establish boundary values of $\tilde F_n(t,\pi)$ at $\pi=0$ and $\pi=1$ using the results in Lemma~\ref{lemmaFproperties}. The boundaries $b_n(\pi)$ are solved on a discrete $\pi$ grid but the values plotted and used in recursion are taken as weighted averages between the grid points. In all figures, the numerically solved boundaries $b_n$ are produced using the same set of parameters unless otherwise mentioned. Boundaries $b_1$ are solved numerically and they are verified with the explicit values given in~\eqref{c}.  
\end{remark}

\begin{figure}[H]
    \centering
    \captionsetup{width=.8\textwidth}
    \makebox[\textwidth][c]{\includegraphics[width=0.7\textwidth]{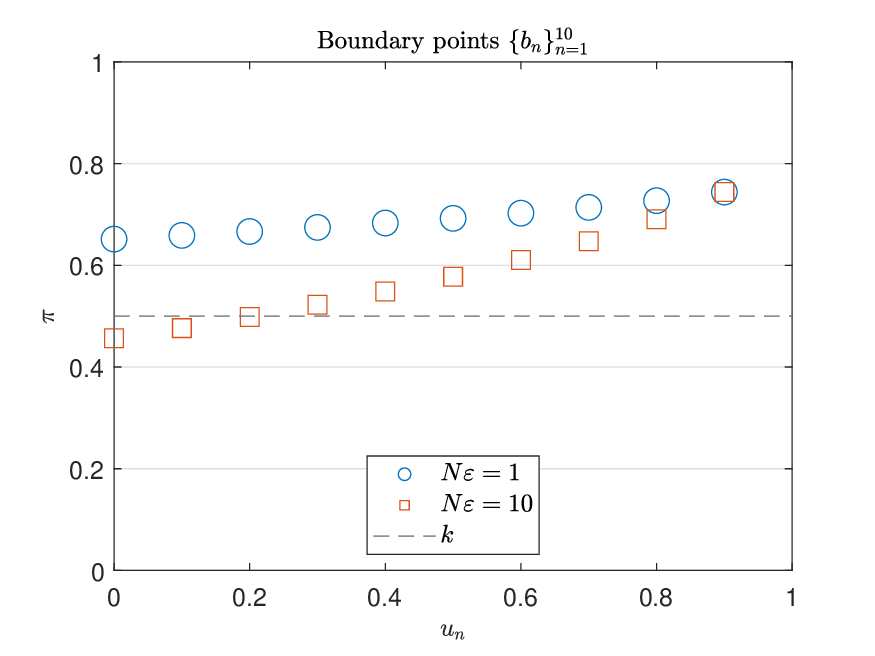}}%
    \caption{Comparison of boundaries $b_{10},\ldots,b_1$ with different total learning rates. Parameters used are: $\mu_0=-1$, $\mu_1=1$, $\sigma=4$, $r=0.1$, $N=10$, and $N\ep =1$ and $N\ep=10$, respectively.}
    \label{fig:epcomparison}
\end{figure}

In Figure~\ref{fig:epcomparison}, we plot the boundaries for two different levels of $N\ep$. Note how the boundary $b_1$ is not dependent on the chosen level of maximum learning, and for some $n$, $b_n<k$ for a higher amount of total learning. This suggests a tradeoff between learning and earning, a prevalent topic in many studies of incomplete information (see, for example~\cite{HKZ} for a classical study under a Bayesian setting).

Next, we compare the boundaries for different levels of $\sigma$ (the diffusion coefficient of the observation process~\eqref{observationprocess}), which affects dynamics of the process $\Pi_t$ (described in~\eqref{pidynamics}) via the signal-to-noise ratio $\rho = \frac{\mu_1-\mu_0}{\sigma}$. It is expected that the boundary $b_n$ is increasing with respect to $\rho$ and so decreasing with respect to $\sigma$: A higher signal-to-noise ratio implies that the investor learns more only by observing $X_t$, whereas a lower signal-to-noise ratio makes the investor more eager to invest to attain additional learning. This phenomenon is confirmed in Figure~\ref{fig:sigmacomparison}.

\begin{figure}[H]
    \centering
    \captionsetup{width=.8\textwidth}
    \makebox[\textwidth][c]{\includegraphics[width=0.7\textwidth]{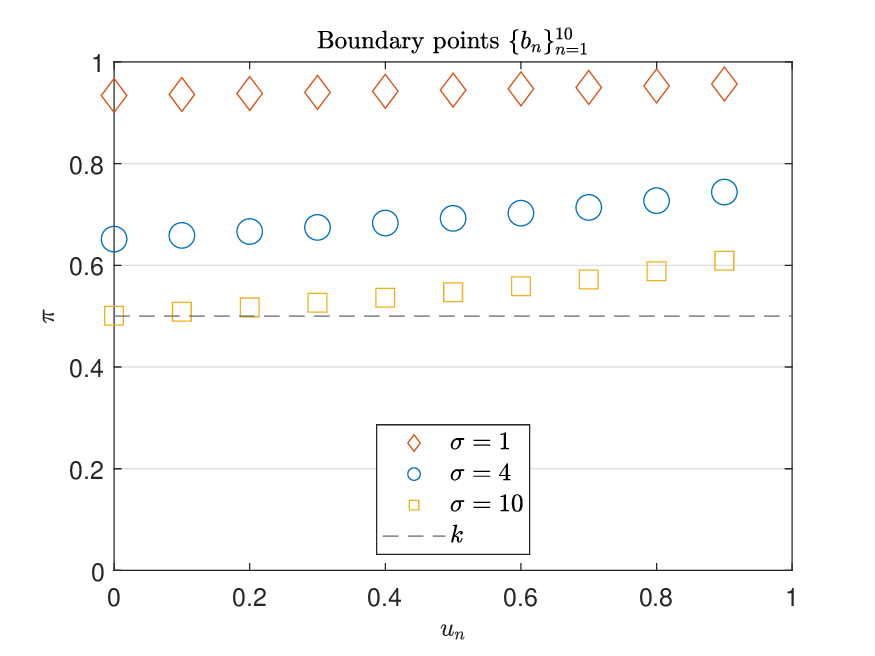}}%
    \caption{Comparison of boundaries $b_{10},\ldots,b_1$ with different values of $\sigma$. Parameters used are: $\mu_0=-1$, $\mu_1=1$, $r=0.1$, $N=10$, $N\ep =1$, and $\sigma=1$, $\sigma=4$, and $\sigma=10$, respectively corresponding to signal-to-noise ratios of $\rho=2$, $\rho=0.5$, and $\rho=0.2$.}
    \label{fig:sigmacomparison}
\end{figure}

Similar comparison can be done for the discount rate $r$. Intuitively, high discount rate penalizes waiting which is tied to earlier investment times. This is confirmed in Figure~\ref{fig:rcomparison}, where we compare boundaries for different levels of $r$.

\begin{figure}[H]
    \centering
    \captionsetup{width=.8\textwidth}
    \makebox[\textwidth][c]{\includegraphics[width=0.7\textwidth]{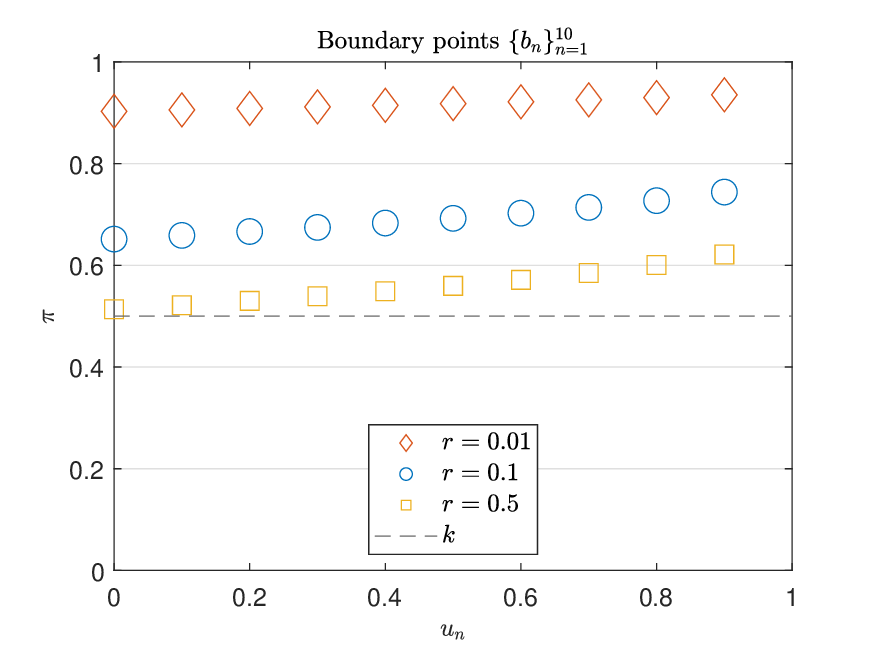}}%
    \caption{Comparison of boundaries $b_{10},\ldots,b_1$ with different values for discount rate $r$. Parameters used were: $\mu_0=-1, \mu_1=1, \sigma=4, N=10$, $N\ep =1$, and $r=0.01$, $r=0.1$ and $r=0.5$, respectively.}
    \label{fig:rcomparison}
\end{figure}

It is also noteworthy to comparie boundaries for different project values $\mu_0$ and $\mu_1$. Recall that in Section~\ref{ch2} we motivated the solution concept to our problem by defining
\[
k=\frac{-\mu_0}{\mu_1-\mu_0} \quad \text{ and } \rho=\frac{\mu_1-\mu_0}{\sigma},
\]
that is, comparing different project values $\mu_0$ and $\mu_1$ reduces to comparing different values for $k$ and $\rho$. We expect that increasing $k$ pushes up the boundary, and with Figure~\ref{fig:sigmacomparison} we argued that the boundary $b_n$ is also monotone with respect to $\rho$. Moreover, both $k$ and $\rho$ are monotone with respect to $\mu_0$. However, $k$ and $\rho$ have different monotonities in $\mu_1$, leading to a mixed effect. This is demonstrated in Figure~\ref{fig:mucomparison}, where there is no monotonicity of $b_n$ with respect to $\mu_1$.

Increasing the number of possible investments $N$ should also affect the boundary. Such comparison is possible with fixing $N\ep$, i.e. to scale inversely the amount of additional learning $\ep$ with $N$. One expects increasing $N$ to decrease the boundary $b_n$, reducing the effect of an individual investment. This is confirmed in Figure~\ref{fig:largeN}.

\begin{figure}[H]
    \centering
    \captionsetup{width=.8\textwidth}
    \makebox[\textwidth][c]{\includegraphics[width=0.7\textwidth]{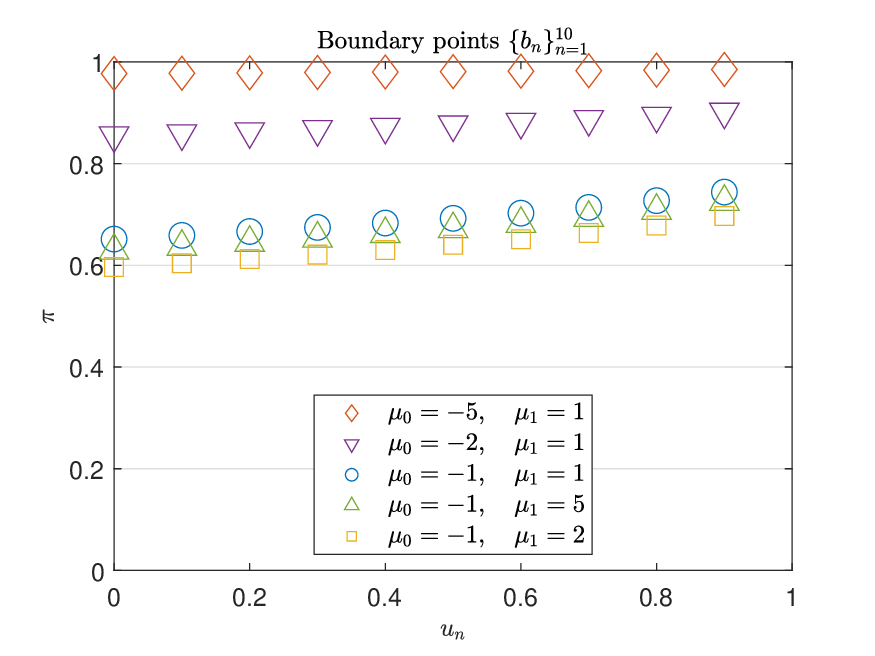}}%
    \caption{Comparison of boundaries $b_{10},\ldots,b_1$ with different values for the parameters $\mu_0$ and $\mu_1$, choice of which directly affect $k$ and $\rho$. Parameters used were: $\sigma=4$, $r=0.1$, $N=10$, $N\ep =1$, and the pairs $(-5,1)$, $(-2,1)$, $(-1,1)$, $(-1,2)$, and $(-1,5)$ of the parameter values $(\mu_0,\mu_1)$, which respectively imply values $(\frac{5}{6},\frac{3}{2})$, $(\frac{2}{3},\frac{3}{4})$, $(\frac{1}{2},\frac{1}{2})$, $(\frac{1}{3},\frac{3}{4})$, and $(\frac{1}{6},\frac{3}{2})$ for pairs $(k,\rho)$.}
    \label{fig:mucomparison}
\end{figure}

\begin{figure}[H]
    \centering
    \captionsetup{width=.80\textwidth}
    \makebox[\textwidth][c]{\includegraphics[width=0.7\textwidth]{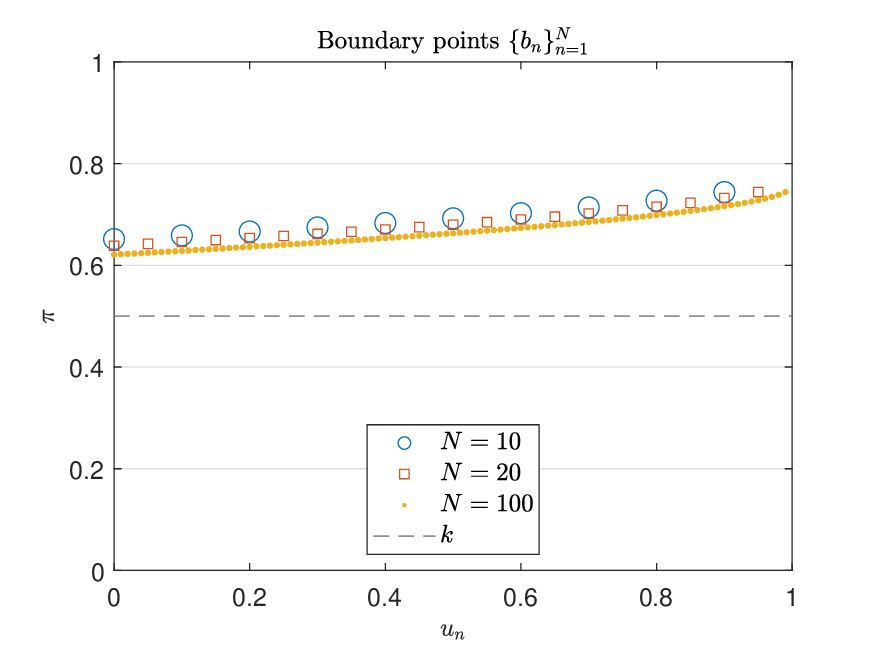}}%
    \caption{Three sequences of boundaries with different total number of exercise rights $N$. Parameters used are: $\mu_0=-1$, $\mu_1=1$, $\sigma=4$, $r=0.1$, $N\ep =1$, and $N=10$, $N=20$ and $N=100$, respectively.}
    \label{fig:largeN}
\end{figure}

We finish with the following remark on possible further extensions to our research.

\begin{remark}
We note that solving for the multiple stopping problem~\eqref{valueftion} indeed is a discrete analogue of solving the continuous problem~\eqref{introdiffusioneq}--\eqref{introvalueftion}. By taking the limit $N\to\infty$ and by detaching from the discrete grid in investment levels and optimal stopping times, we expect the problem to collapse into a continuous stochastic control problem, where one attains a continuous boundary $b$ corresponding to optimal control of the process $(U_t)_{t\geq 0}$, formal definition of which we leave for further research. Indeed, definition of the admissible class of controls and characterizing the boundary $b$ through suitable boundary conditions remains to be an ample opportunity for future research.
\end{remark}

\begin{acknowledgement}
    We sincerely thank Erik Ekström for his patient, kind, and useful guidance and countless discussions which were invaluable in shaping this article to its current form.
\end{acknowledgement}

\bibliography{bibfile.bib}{}
	\bibliographystyle{abbrv}
\end{document}